\documentclass{amsart}

\usepackage{amsmath,amsthm,amsfonts,amssymb,amscd}
\usepackage[all]{xy}
\usepackage[english]{babel}

\usepackage[dvips]{graphicx}
\usepackage[T1]{fontenc}
\usepackage[latin1]{inputenc}
\usepackage{ae}

{\theoremstyle{plain}
\newtheorem{theorem}{Theorem}[section]
\newtheorem{proposition}{Proposition}[section]
\newtheorem{lemma}{Lemma}[section]

\newtheorem{remark}{Remark}[section]
\newtheorem{corollary}{Corollary}[section]
}
\newcommand{\nc}{\newcommand}
\nc {\hh}{\check{h}}
\nc {\DD}{\mathcal{D}}
\nc {\CC}{\mathbb{C}}
\nc {\Pp}{\mathbb{P}}
\nc {\Ss}{\mathcal{S}}
\nc {\PP}{\mathbb{P}^{n}}
\nc {\Pd}{ \check{\mathbb{P}}^{n}}
\nc {\WW}{\mathcal{W}}
\nc {\Sym}{\mathrm{Sym}}
\nc {\OO}{\mathcal{O}}
\nc {\UU}{\mathcal{U}}
\nc {\EE}{\mathcal{E}}
\nc {\MM}{\mathcal{M}}
\nc {\KK}{\mathcal{K}}
\nc {\PW}{\mathcal{P}}
\nc {\NW}{\mathcal{N}_{\WW}}
\nc {\FF}{\mathcal{F}}
\nc {\GG}{\mathcal{G}}
\nc {\ZZ}{\mathcal{Z}}
\nc {\LL}{\mathcal{L}}
\nc {\HH}{\mathcal{H}}
\nc {\NN}{\mathcal{N}}
\nc {\VV}{\mathcal{V}}
\nc {\Ww}{\mathbb{W}}
\nc {\QQ}{\mathbb{Q}}
\nc {\II}{\mathcal{I}}

\setlength{\unitlength}{1cm}

\date{}

\begin{document}

\title[Characteristic Numbers and Invariant Subvarieties]{Characteristic Numbers and Invariant Subvarieties for Projective Webs}

\author{M. Falla Luza$^1$}
\author{T. Fassarella$^2$}

\thanks{1. Departamento de An\'alise -- IM -- UFF  \\
              M\'ario Santos Braga s/n -- Niter\'oi, 24.020-140 RJ Brasil\\ 
             maycolfl@impa.br}

\thanks{2. Departamento de An\'alise -- IM -- UFF  \\
              M\'ario Santos Braga s/n -- Niter\'oi, 24.020-140 RJ Brasil\\ 
             tfassarella@id.uff.br}
             
\begin{abstract}
We define the characteristic numbers of a holomorphic $k$-- distribution of any dimension on $\PP$ and obtain relations between these numbers and the characteristic numbers of an invariant subvariety. As an application we bound the degree of a smooth invariant hypersurface.
\end{abstract}
\maketitle 

%%%%%%%%%%%%%%%%%%%%%%%%%%%%%%%%%%%%%%%%%%%%%%%%%%%%%%%%%%%%%%%%%%%%%%%%%%%%%
%%%%%%%%%%%%%%%%%%%%%%%%%%%%%%%%%%%%%%%%%%%%%%%%%%%%%%%%%%%%%%%%%%%%%%%%%%%%%

\section*{Introduction}

The aim of this work is to relate the characteristic numbers of projective $k$--webs, or more generally, $k$--distributions of arbitrary dimension to those of invariant subvarieties. Loosely speaking, a $k$--distribution $\WW$ of dimension $p$ on $\PP$ is locally given by $k$ holomorphic fields of $p$--planes on the complement of a Zariski closed set. The most basic invariants attached to it are its characteristic numbers $d_0,...,d_p$ where $d_i$ is defined as the degree of the tancency locus of the distribution with a generic $\mathbb P^{n-p+i-1}$ linearly embedded in $\PP$. Suppose now $V \hookrightarrow \PP$ is a subvariety invariant by $\WW$. Our goal is to obtain inequalities involving the characteristic numbers of $V$ and $\WW$. As a corollary we give some bounds for the degree of a smooth invariant hypersurface. 

The question of bounding the degree of an algebraic curve which is a solution of a foliation on $\mathbb P^2$ in terms of the degree of the foliation was treated by H. Poincar\'e in \cite{Po}. Versions of this problem have been considered in a number of recent works, see for example \cite{Soa} and references therein. 
In that paper, M. Soares considered one dimensional projective foliations and their tangency locus with a pencil of hyperplanes. This tangency locus is an analogous for foliations of the polar classes for projective varieties. For a variety $V$ of dimension $q$ invariant by a one dimensional foliation $\mathcal{F}$ he compaired their polar classes and obtained the relation $\deg(P^{V}_{q-j})\le \deg(P^{V}_{q-j-1})\cdot(\deg(\mathcal F) + 1)$, where $P^{V}_{k}$ is the $k$--th polar class of $V$ and $j$ is some number between $0$ and $q-1$, see \cite[Theorem 1]{Soa}. As a consequence the bound $\deg(V)\le\deg(\mathcal F)+2$ was obtained for a smooth invariant hypersurface. Polar classes was also considered by R. Mol in \cite{Mol} for holomorphic distributions of arbitrary dimension. He expresses these classes in terms of the Chern-Mather classes of the tangent sheaf of the distribution, moreover, Theorem 1 of \cite{Soa} is generalizated. 

In this paper we associate to any $k$--distribution a subvariety of $\mathbb P(T^* \PP)$. When we write its cohomology class, the characteristic numbers appear naturally. We also define, as in the case of foliations, the polar classes of a $k$--distribution and get theirs degrees in terms of the characteristic numbers. Then we consider a subvariety which is invariant by a $k$--distribution and relate the polar classes of them obtaining more relations than the known for distributions, see Theorem \ref{T:theorem}. As a consequence we obtain as many bounds for the degree of a smooth invariant hypersurface as the dimension of the $k$--distribution, see Corollary \ref{C:corollary}.

%%%%%%%%%%%%%%%%%%%%%%%%%%%%%%%%%%%%%%%%%%%%%%%%%%%%%%%%%%%%%%%%%%%%%%%%%%%%%
%%%%%%%%%%%%%%%%%%%%%%%%%%%%%%%%%%%%%%%%%%%%%%%%%%%%%%%%%%%%%%%%%%%%%%%%%%%%%
\section{Characteristic numbers of projective webs}

Let $\PP$ be the $n$-dimensional complex projective space and $M=\Pp(T^{*}\PP)$ the projectivization of its cotangent bundle. Since $M$ can be identify with the incidence variety of points and hyperplanes in $\PP$, one has two natural projections
$$
\xymatrix{
&M=\Pp(T^{*}\PP) \ar[dl]_{\pi} \ar[dr]^{\check{\pi}} &\subseteq \PP \times \Pd \\
\PP& &\Pd} 
$$

Let us denote by $h=c_{1}(\OO_{\PP}(1))$ and $\hh=c_{1}(\OO_{\Pd}(1))$ the hyperplane classes on $\PP$ and $\Pd$ respectively. We still denote by $h$ and $\hh$ the respective pullbacks to $M$ by $\pi$ and $\check{\pi}$. Note that the cohomology ring $H^{*}(M)$ is, via the pullback map $\pi^{*}:H^{*}(\PP)\rightarrow H^{*}(M)$, an algebra over the ring $H^{*}(\PP)$, wich is generated by $\xi = c_{1}(\mathcal{O}_{M}(-1))$, the Chern class of the tautological bundle $\mathcal{O}_{M}(-1)$, with the relation 
$
\sum_{i=0}^{n}\binom{n+1}{i+1}h^{n-i}\xi^{i}=0
$ 
(see \cite{GH}, pg. 606).

Observe that $h^{n}$ is the class of a fiber of $\pi$ and the restriction of $\mathcal{O}_{M}(-1)$ to each fiber is the universal bundle, so that
 $\int_M\xi^{n-1}h^{n}=(-1)^{n-1}$ and $\int_M\xi^{n}h^{n-1}=(-1)^{n}(n+1)$, where the last equation follows from the previous relation. Then if we write $\hh = a h + b \xi$ it is easy to see that $b=-1$ and therefore we get the following description of $H^{*}(M)$

\begin{equation*} \label{cohomologia-de-M}
H^{*}(M)=\frac{\mathbb{Z}[h,\hh]}{\langle h^{n+1}, h^{n} - h^{n-1} \hh + \ldots + (-1)^{n} \hh^{n} \rangle}.
\end{equation*}
Clearly we also have the relations $\hh^{n+1}=0$, $\int_Mh^{n}\hh^{n-1}=\int_Mh^{n-1}\hh^{n}=1$.

Let $V \subseteq \PP$ be an irreducible projective subvariety, the \textbf{conormal variety} of $V$ is defined as ${\rm{Con}} (V)= \overline{\Pp(N^{*}V_{sm})}$, where $V_{sm}$ denotes the smooth part of $V$ and $N^{*}V_{sm}$ its conormal bundle. We notice that via the identification $M\subset \PP \times \Pd$, ${\rm{Con}}(V)$ is the closure of the set of pairs $(x,H)$ such that $x$ is a smooth point of $V$ and $H$ is a hyperplane containing the tangent plane $T_xV$.  For example, the conormal variety of a point $\Pp^{0} \subseteq \PP$ is all the fiber $\pi ^{-1}(\Pp^{0})$, so its class is $h^{n}$. More generally one has the following lemma.

\begin{lemma}\label{conormal-de-Pi}
The conormal variety of a linearly embedded $\Pp^{j} \subseteq \PP$ is a trivial $\Pp^{n-j-1}$ bundle over $\Pp^{j}$ which class is given by $$[{\rm{Con}}(\Pp^{j})]=(-1)^{j}h^{n}+ \ldots +h^{n-j+2}\hh^{j-2}-h^{n-j+1}\hh^{j-1}+h^{n-j}\hh ^{j}.$$
\end{lemma}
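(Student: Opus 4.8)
The plan is to first describe $\mathrm{Con}(\Pp^j)$ explicitly as a product of two linear spaces, and then to pin down its cohomology class by pairing it against a basis of the complementary graded piece of $H^*(M)$.

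First I would use that a linearly embedded $\Pp^j$ is smooth everywhere and that its tangent plane at every point is $\Pp^j$ itself. Hence, under the identification $M\subset\PP\times\Pd$, a pair $(x,H)$ lies in $\mathrm{Con}(\Pp^j)$ exactly when $x\in\Pp^j$ and $\Pp^j\subseteq H$; no closure is needed. Writing $L=\{H\in\Pd : \Pp^j\subseteq H\}$, a one-line computation in coordinates shows that $L$ is a linearly embedded $\check{\Pp}^{n-j-1}\subseteq\Pd$, so $\mathrm{Con}(\Pp^j)=\Pp^j\times L$, which is a trivial $\Pp^{n-j-1}$--bundle over $\Pp^j$ via $\pi$. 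This proves the first assertion. I would also record that $\pi$ and $\check{\pi}$ restrict on $\Pp^j\times L$ to the two projections, so that, denoting by $\alpha$ and $\beta$ the hyperplane classes of $\Pp^j$ and $L$, one has $h|_{\mathrm{Con}(\Pp^j)}=\alpha$ and $\hh|_{\mathrm{Con}(\Pp^j)}=\beta$, with $\alpha^{j+1}=0$, $\beta^{n-j}=0$ and $\int_{\Pp^j\times L}\alpha^{j}\beta^{n-j-1}=1$.

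For the class itself, the key remark is that in degrees below $2n$ the given presentation of $H^*(M)$ has no relations, so $\{\,h^{n-1-a}\hh^{a}: 0\le a\le n-1\,\}$ is a $\mathbb{Z}$--basis of $H^{2(n-1)}(M)$; since $M$ is smooth projective with torsion--free cohomology, the intersection pairing $H^{2n}(M)\times H^{2(n-1)}(M)\to\mathbb{Z}$ is perfect. Therefore it suffices to check that $[\mathrm{Con}(\Pp^j)]$ and the claimed class $\sigma_j:=\sum_{i=0}^{j}(-1)^{j-i}h^{n-i}\hh^{i}$ have the same intersection number with each $h^{n-1-a}\hh^{a}$. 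On one side, the projection formula together with the previous paragraph gives $\int_M[\mathrm{Con}(\Pp^j)]\cdot h^{n-1-a}\hh^{a}=\int_{\Pp^j\times L}\alpha^{n-1-a}\beta^{a}$, which equals $1$ for $a=n-j-1$ and $0$ otherwise. On the other side, using $h^{n+1}=\hh^{n+1}=0$ and $\int_M h^{n}\hh^{n-1}=\int_M h^{n-1}\hh^{n}=1$, only the monomials of $\sigma_j\cdot h^{n-1-a}\hh^{a}$ whose $h$-- and $\hh$--exponents both lie in $\{n-1,n\}$ survive, and a short telescoping of the alternating coefficients of $\sigma_j$ gives the same value $\delta_{a,\,n-j-1}$. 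Equivalently, one may start from an unknown $\sum_{i=0}^{n-1}c_i h^{n-i}\hh^{i}$, match intersection numbers, obtain the triangular system $c_m+c_{m+1}=\delta_{m,j}$ with $c_n=0$, and solve it to get $c_m=(-1)^{j-m}$ for $m\le j$ and $c_m=0$ otherwise.

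The one point requiring care is to resist computing $[\mathrm{Con}(\Pp^j)]$ as the naive product $\pi^{*}[\Pp^j]\cdot\check{\pi}^{*}[L]=h^{n-j}\hh^{j+1}$: that intersection is improper, with excess dimension one, so this class is \emph{not} the conormal class, and one genuinely needs either the test--cycle/Poincar\'e duality argument above or the corresponding (more laborious) excess intersection computation. Beyond that, the proof is routine bookkeeping of which intersection numbers on $M$ vanish.
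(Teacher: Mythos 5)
Your proof is correct and follows essentially the same route as the paper: the class is determined by pairing against the monomials $h^{k}\hh^{n-1-k}$, the intersection numbers come out to $\delta$'s, and the resulting triangular system $c_m+c_{m+1}=\delta_{m,j}$ is solved (or, equivalently, the claimed class is checked to have the same pairings). Your additional justifications — the explicit product description $\Pp^j\times L$ used with the projection formula, the remark that the pairing is perfect, and the warning that the naive excess intersection $h^{n-j}\hh^{j+1}$ is not the conormal class — are all accurate and merely make explicit what the paper leaves implicit.
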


\begin{proof}
Recall that ${\rm{Con}}(\Pp^{j})=\{(p,H)\in M : p \in \Pp^{j}, H \supseteq \Pp^{j}\}$ is an irreducible subvariety of $M$ of codimension $n$, so we can write $$[{\rm{Con}}(\Pp^{j})]=a_{n}h^{n} + a_{n-1}h^{n-1}\hh + \ldots + a_{1}h\hh^{n-1}$$ and use the before relations to get 
$$1=\int_M[{\rm{Con}}(\Pp^{j})]\cdot h^{j}\cdot \hh^{n-j-1}=a_{n-j}+a_{n-j-1}$$ 
and 
$$0=\int_M[{\rm{Con}}(\Pp^{j})]\cdot h^{k}\cdot\hh^{n-k-1}=a_{n-k}+a_{n-k-1}$$
for $k \in \{0,1,\ldots, j-1,j+1, \ldots n-1\}$ (here $a_{0}=0$).  The lemma follows from the previous equalities. 
\end{proof}
For any projective subvariety $V \subseteq \PP$ of dimension $q$ we define its \textbf{characteristic numbers} as the integers $a_{i}'s$ such that $$[{\rm{Con}}(V)]= a_{n}h^{n}+a_{n-1}h^{n-1}\hh + \ldots +a_{1}h \hh^{n-1}.$$ For convenience we fix $a_0=0$ and in particular we have 
$$
\deg(V)=\int_M[{\rm{Con}}(V)]\cdot h^{q}\cdot \hh^{n-q-1}= a_{n-q} + a_{n-q-1}. 
$$

Now we refer to \cite[Section 1.3]{PiPer} for more details on the following definitions. Fix $k, p\in \mathbb N$ with $1\le p <n$. Roughly speaking, to give a $k$--distribution of dimension $p$ is the same to give, over a generic point, a set of $k$ various $p$-dimensional planes, varying holomorphically. More precisely, a \textbf{$k$--distribution $\WW$ of dimension $p$ on $\PP$} is given by an open covering $\mathcal{U}=\{U_{i}\}$ of $\PP$ and $k$-symmetric $(n-p)$-forms $\omega_{i} \in \Sym^{k}\Omega^{n-p}_{\PP}(U_{i})$ subject to the conditions:
\begin{enumerate}
 \item For each non-empty intersection $U_{i} \cap U_{j}$ there exists a non-vanishing function $g_{ij} \in \OO_{U_{i} \cap U_{j}}$ such that $\omega_{i}= g_{ij} \omega_{j}$.
 \item For every $i$ the zero set of $\omega_{i}$ has codimension at least two.
 \item For every $i$ and a generic $x\in U_i$, the germ of $\omega_{i}$ at $x$ seen as homogeneous polynomial of degree $k$ in the ring $\mathcal O_x [...,dx_{i_1}\wedge\dots\wedge dx_{i_{n-p}},...]$ is square--free.
 \item For every $i$ and a generic $x \in U_{i}$, the germ of $\omega_{i}$ at $x$ is a product of $k$ vaious $(n-p)$-forms $\beta_{1}, \ldots,  \beta_{k}$, where each $\beta_{i}$ is a wedge product of $(n-p)$ linear forms. 
\end{enumerate}
If in addition the forms $\beta_{i}$ are integrable we will say that the distribution is a \textbf{$k$-web of dimension $p$ on $\PP$}.

The $k$-symmetric $(n-p)$-forms $\{\omega_i\}$ patch together to form a global section $\omega=\{\omega_i\}\in H^0(\PP,Sym^{k}\Omega^{n-p}_{\PP}\otimes \mathcal L)$ where $\mathcal L$ is the line bundle over $\PP$ determined by the cocycle $\{g_{ij}\}$. The \textbf{singular set} of $\WW$, denoted by ${\rm{Sing}}(\WW)$, is the zero set of the twisted $k$-symmetric $(n-p)$-form $\omega$. The \textbf{degree} of $\WW$, denoted by $\deg(\WW)$, is geometrically defined as the degree of the tangency locus between $\WW$ and a generic $\mathbb P^{n-p}$ linearly embedded in $\PP$. If $i:\mathbb P^{n-p} \hookrightarrow  \PP$ is the inclusion then the degree of $\WW$ is the degree of the zero divisor of the twisted $k$-symmetric $(n-p)$-form $i^*\omega\in H^0(\mathbb P^{n-p},Sym^{k}\Omega^{n-p}_{\mathbb P^{n-p}}\otimes \mathcal L|_{\mathbb P^{n-p}})$. Since $\Omega^{n-p}_{\mathbb P^{n-p}}=\mathcal O_{\mathbb P^{n-p}}(-n+p-1)$ it follows that $\mathcal L=\mathcal O_{\mathbb P^{n}}(\deg(\WW)+k(n-p)+k)$.

We say that $x\in\PP$ is a \textbf{smooth point} of $\WW$, for short $x\in \WW_{sm}$, if $x\notin {\rm{Sing}}(\WW)$ and the germ of $\omega$ at $x$ satisfies the conditions described in $(3)$ and $(4)$ above. For any smooth point $x$ of $\WW$ we have $k$ distinct (not necessarily in general position) linearly embedded subspaces of dimension $p$ passing through $x$. Each one of these subspaces will be called $p$-plane tangent to $\WW$ at $x$ and denoted by $T^1_x\WW,...,T^k_x\WW$.

To any $k$--distribution $\WW$ of dimension $p$ we can associate the subvariety $S_{\WW}$ of codimension $p$ of $M$ defined as $$S_{\WW}=\overline{\{(x,H)\in M: x \in \WW_{sm} \hspace{0.1cm} and \hspace{0.1cm} \exists \hspace{0.1cm} 1 \leq i \leq k ,\hspace{0.1cm} H\supset T^i_x\WW \hspace{0.1cm}  \}}.$$

The \textbf{characteristic numbers of $\WW$} are by definition the $p+1$ integers $$d_{i}= \int_M[S_{\WW}]\cdot[{\rm{Con}}(\Pp^{n-p-1+i})]\cdot h^{n-p-1} $$
with $i$ ranging from $0$ to $p$. Notice that $d_i$ is the degree of the tangency locus between $\WW$ and a generic $\mathbb P^{n-p+i-1}$. In particular $d_{0}=k$ and $d_{1}$ is the degree of $\WW$, that is $d_{1}=\deg(\WW)$. We remark that in the case $p=n-1$ we arrive in the same definition of \cite[Section 1.4.1]{PiPer}.

\begin{lemma}
The class of $S_{\WW} \subseteq M$ is given by $[S_{\WW}]=d_{p}h^{p}+ \ldots + d_{1}h \hh^{p-1}+ d_{0} \hh^{p}.$  
\end{lemma}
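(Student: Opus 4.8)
The plan is to expand $[S_{\WW}]$ in a natural basis of $H^{2p}(M)$ and then recover its coefficients by intersecting with the classes occurring in the definition of the characteristic numbers $d_i$. Since $p<n$, the defining relations of $H^{*}(M)$ live in codimension at least $n$, so $H^{2p}(M)$ is freely generated over $\mathbb{Z}$ by the monomials $h^{i}\hh^{p-i}$, $i=0,\dots,p$. As $S_{\WW}$ has codimension $p$ in $M$, we may therefore write $[S_{\WW}]=\sum_{i=0}^{p}c_{i}\,h^{i}\hh^{p-i}$ for unique integers $c_{i}$, and it is enough to show $c_{t}=d_{t}$ for $t=0,\dots,p$.

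First I would record the only intersection numbers on $M$ that intervene, namely $\int_{M}h^{a}\hh^{b}$ with $a+b=2n-1$. From $h^{n+1}=\hh^{n+1}=0$ one sees $h^{a}\hh^{b}=0$ unless $0\le a,b\le n$, and for $a+b=2n-1$ this forces $\{a,b\}=\{n,n-1\}$; together with $\int_{M}h^{n}\hh^{n-1}=\int_{M}h^{n-1}\hh^{n}=1$ this yields $\int_{M}h^{a}\hh^{b}=1$ when $\{a,b\}=\{n,n-1\}$ and $0$ otherwise. (Note that the degree-$n$ relation in $H^{*}(M)$ plays no role in this step.)

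Then I would carry out the pairing with $[{\rm{Con}}(\Pp^{n-p-1+t})]\cdot h^{n-p-1}$. Setting $j=n-p-1+t$ and rewriting Lemma \ref{conormal-de-Pi} as $[{\rm{Con}}(\Pp^{j})]=\sum_{m=0}^{j}(-1)^{m}h^{n-j+m}\hh^{j-m}$, I multiply this by $[S_{\WW}]\cdot h^{n-p-1}=\sum_{i}c_{i}h^{n-p-1+i}\hh^{p-i}$ and collect the monomials $h^{a}\hh^{b}$ with $a+b=2n-1$. By the previous paragraph only the terms with $\{a,b\}=\{n,n-1\}$ contribute: the monomials with $a=n$ give $c_{i}(-1)^{t-i}$ for each $i\le t$, while those with $a=n-1$ give $c_{i}(-1)^{t-i-1}$ for each $i\le t-1$. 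For every $i<t$ these two contributions are opposite and cancel, so only the term $i=t$ remains, and we obtain $\int_{M}[S_{\WW}]\cdot[{\rm{Con}}(\Pp^{j})]\cdot h^{n-p-1}=c_{t}$. Since this integral equals $d_{t}$ by definition, we conclude $c_{t}=d_{t}$, which is the asserted formula.

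The whole argument is elementary; the only place that demands a bit of attention is the last step, where one must track exponents and signs carefully enough to see the off-diagonal terms telescope away. Conceptually, what is being checked is that the classes $[{\rm{Con}}(\Pp^{n-p-1+t})]\cdot h^{n-p-1}$, $t=0,\dots,p$, form the basis of $H^{2(2n-1-p)}(M)$ dual to $\{h^{i}\hh^{p-i}\}_{i=0}^{p}$ under the intersection pairing on $M$; granting this, the characteristic numbers $d_{t}$ simply read off the coordinates of $[S_{\WW}]$.
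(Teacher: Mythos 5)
Your argument is correct and is exactly the computation the paper leaves implicit: its proof reads "follows from Lemma \ref{conormal-de-Pi} and the definition of the characteristic numbers," and you have simply carried out that pairing, verifying that the classes $[{\rm{Con}}(\Pp^{n-p-1+t})]\cdot h^{n-p-1}$ are dual to the monomial basis $h^{i}\hh^{p-i}$ of $H^{2p}(M)$. The sign bookkeeping and the telescoping of the off-diagonal terms check out, so nothing further is needed.
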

\begin{proof}
It follows from Lemma \ref{conormal-de-Pi} and definition of characteristic numbers. 
\end{proof}

%%%%%%%%%%%%%%%%%%%%%%%%%%%%%%%%%%%%%%%%%%%%%%%%%%%%%%%%%%%%%%%%%%%%%%%%%%%%%%%
%%%%%%%%%%%%%%%%%%%%%%%%%%%%%%%%%%%%%%%%%%%%%%%%%%%%%%%%%%%%%%%%%%%%%%%%%%%%%%%
\section{Relations on the characteristic numbers for k-distributions and invariant subvarieties}
Let $\mathcal{D}: \emptyset=L_{n+1} \subseteq L_{n} \subseteq \ldots \subseteq L_{1}\subseteq L_{0}=\PP$ be a flag of linearly embedded subspaces, where $L_i$ has codimension $i$. For each $i\in \{0,...,n+1\}$ we fix the set $\HH_{i}$ of hyperplanes containing $L_{i}$; it  corresponds to a $(i-1)$--dimensional linear space on $\Pd$. Therefore the class of its associated variety $S_{\HH_{i}}=\check{\pi}^{-1}(\HH_{i}) \subseteq M$ is 
$$
[S_{\HH_{i}}]=\hh^{n-i+1}.
$$ 
Now, for a projective subvariety $V \subseteq \PP$ of dimension $q$ and $j \in \{0, \ldots, q\}$ we denote by $P^{V}_{j}= {\rm{tang}}(V,\HH_{q-j+2})$, where $ {\rm{tang}}(V,\HH_{i}):= \pi ({\rm{Con}}(V)\cap S_{\HH_{i}})$. On the other hand $P^{V}_j$ can be seen as pre-image of a Schubert cycle in the Grassmannian by the Gauss map of $V$. To be more precise let $\mathbb G(q,n)$ be the Grassmannian of $q$--dimensional linear spaces of $\mathbb P^n$ and consider the Schubert cycle of codimension $j$
$$
\sigma_j^q=\sigma_j^q(L_{q-j+2})=\{\Gamma\in\mathbb G(q,n)\;:\; \dim(\Gamma\cap L_{q-j+2})\ge j-1\}.
$$
If $\mathcal G_V:V\dashrightarrow \mathbb G(q,n)$ is the natural Gauss map associated to $V$ which sends a smooth point $x\in V_{sm}$ to the tangent space $T_xV$  then $P^{V}_j=\overline{{\mathcal G_V}_{|_{V_{sm}}}^{-1}(\sigma_j^q)}$. These are the polar classes of the variety $V$ defined in \cite{Pi}. It follows from the transversality of a general translate (cf. \cite{Kle}) that for a generic flag, $P^{V}_j$ is equidimensional and its dimension is $q-j$. See \cite{Pi} for details.

In the same spirit, for a $k$--distribution $\WW$ of dimension $p$ and $j \in \{1, \ldots, p+1\}$ we set $P^{\WW}_{j}:= {\rm{tang}}(\WW,\HH_{p-j+2})$ where ${\rm{tang}}(\WW,\HH_{i}):= \pi (S_{\WW}\cap S_{\HH_{i}})$. When $k=1$ we obtain the polar classes of the distribution $\WW$ given in \cite{Mol} and also in \cite{FP}. 

In order to define the Gauss map associated to the distribution we consider $X=\mathbb G(p,n)^k/S_k$ the quotient of $\mathbb G(p,n)^k=\mathbb G(p,n)\times\cdots\times\mathbb G(p,n)$ by the equivalence relation which identifies two points $(\Lambda_1,...,\Lambda_k)$ and $(\Lambda_{\tau(1)},...,\Lambda_{\tau(k)})$ , where $\tau\in S_k$ (the symmetric group in $k$ elements). Then we define the Gauss map 
\begin{eqnarray*}
\mathcal G_{\mathcal W}: \mathbb P^n &\dashrightarrow& X \\
            x   &\mapsto& [T_x^1\mathcal W,...,T_x^k\mathcal W].
\end{eqnarray*}
Since $\mathcal W$ is given locally by $k$ holomorphic distributions of dimension $p$ on the complement of a Zariski closed set, each coordinate of $\mathcal G_{\mathcal W}$ is locally the Gauss map associated to one of these distributions. Therefore $\mathcal G_{\mathcal W}$ is a rational map.

Let us consider the Schubert cycle
$$
\sigma^p_j=\sigma^p_j(L_{p-j+2})=\{\Lambda\in\mathbb G(p,n)\;:\; \dim(\Lambda\cap L_{p-j+2})\ge j-1\}
$$
and the respective closed set in the quotient
$$
\Sigma^p_j=\Sigma^p_j(L_{p-j+2})=\sigma^p_j\times\mathbb G(p,n)^{k-1}/S_{k} \subset X.
$$

If $U$ is the maximal Zariski open set where $\mathcal G_{\WW}$ is regular, it is not hard to see that $P^{\WW}_j=\overline{{\mathcal G_{\WW}}^{-1}_{|_U}(\Sigma^p_j)}$.

%%%%%%%%%%%%%%%%%%%%%%%%%%%%%%%%%%%%%%%%%%%%%%%%%%%%%%%%%%%%%%%%%%%%%%%%%%%%%%%
%Para V hacemos j entre 0 y q, teniendo en cuenta que tan(V,\HH_{0,q+2}=V
%Para \WW no hacemos j=0 pues en ese caso obtenemos todo \PP, y hacemos j=p+1
%%%%%%%%%%%%%%%%%%%%%%%%%%%%%%%%%%%%%%%%%%%%%%%%%%%%%%%%%%%%%%%%%%%%%%%%%%%%%%%

\begin{proposition}\label{P:grau}
If $a_{0}, \ldots , a_{n}$ and $d_{0}, \ldots, d_{p}$ are the characteristic numbers of the subvariety $V$ and the $k$--distribution $\WW$ respectively, then for any $j \in \{0, \ldots, q\}$ and any $s \in \{1, \ldots, p\}$ we have
$$
\deg(P^{V}_{j})=a_{n-(q-j)}+a_{n-(q-j)-1}, \hspace{0.5cm} \deg(P^{\WW}_{s})=d_{s}+d_{s-1}. 
$$
In particular $\deg(P^{V}_{0})=\deg(V)$ and $\deg(P^{\WW}_{1})=k+\deg(\WW)$. 
\end{proposition}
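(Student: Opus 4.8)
The plan is to realize each of the two polar classes as a push-forward along $\pi$ of an intersection cycle in $M$, and then to read off its degree from the intersection numbers in $H^{*}(M)$ recorded above. Fix a generic flag $\DD$. Since ${\rm PGL}_{n+1}$ acts transitively on $M$ (it acts transitively on pairs $x\subset H$) and the loci $S_{\HH_{i}}$ form a single orbit under this action, Kleiman's transversality theorem \cite{Kle} gives, for a generic $\DD$, that $\,{\rm Con}(V)\cap S_{\HH_{q-j+2}}$ and $S_{\WW}\cap S_{\HH_{p-s+2}}$ are dimensionally proper — of dimensions $q-j$ and $n-s$ respectively — and generically reduced. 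Hence, using $[S_{\HH_{i}}]=\hh^{\,n-i+1}$, in $H^{*}(M)$ one has $[{\rm Con}(V)\cap S_{\HH_{q-j+2}}]=[{\rm Con}(V)]\cdot\hh^{\,n-q+j-1}$ and $[S_{\WW}\cap S_{\HH_{p-s+2}}]=[S_{\WW}]\cdot\hh^{\,n-p+s-1}$.

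Next I would verify that $\pi$ restricts to a birational morphism from each of these intersections onto $P^{V}_{j}$, resp. $P^{\WW}_{s}$. For a generic point $x$ of $P^{V}_{j}$ the space $T_{x}V$ meets $L_{q-j+2}$ in dimension exactly $j-1$ — the open Schubert cell being dense in $\sigma^{q}_{j}$ for the generic flag — so the linear span of $T_{x}V$ and $L_{q-j+2}$ is a hyperplane, namely the unique $H$ with $(x,H)$ in the intersection. The same argument works for $P^{\WW}_{s}$: over a generic $x\in P^{\WW}_{s}$ exactly one of $T^{1}_{x}\WW,\dots,T^{k}_{x}\WW$ meets $L_{p-s+2}$ in dimension $\ge s-1$, and does so with equality, so a single hyperplane lies over $x$. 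Consequently $[P^{V}_{j}]=\pi_{*}\bigl([{\rm Con}(V)]\cdot\hh^{\,n-q+j-1}\bigr)$ and $[P^{\WW}_{s}]=\pi_{*}\bigl([S_{\WW}]\cdot\hh^{\,n-p+s-1}\bigr)$ in $H^{*}(\PP)$.

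Finally, since $\deg(P^{V}_{j})=\int_{\PP}[P^{V}_{j}]\cdot h^{q-j}$ and $\deg(P^{\WW}_{s})=\int_{\PP}[P^{\WW}_{s}]\cdot h^{n-s}$, the projection formula rewrites these as $\int_{M}[{\rm Con}(V)]\cdot h^{q-j}\hh^{\,n-q+j-1}$ and $\int_{M}[S_{\WW}]\cdot h^{n-s}\hh^{\,n-p+s-1}$. Substituting $[{\rm Con}(V)]=\sum_{i}a_{i}h^{i}\hh^{\,n-i}$ and $[S_{\WW}]=\sum_{i}d_{i}h^{i}\hh^{\,p-i}$, and observing that $h^{n+1}=\hh^{n+1}=0$ together with $\int_{M}h^{n}\hh^{n-1}=\int_{M}h^{n-1}\hh^{n}=1$ force $\int_{M}h^{a}\hh^{b}$ to equal $1$ exactly when $a+b=2n-1$ with $\max(a,b)\le n$ and $0$ otherwise, only the monomials whose $h$-exponent is $n-1$ or $n$ contribute; this gives $\deg(P^{V}_{j})=a_{n-(q-j)}+a_{n-(q-j)-1}$ and $\deg(P^{\WW}_{s})=d_{s}+d_{s-1}$. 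The two ``in particular'' claims are the cases $j=0$ — where $a_{0}=0$ yields $\deg(P^{V}_{0})=a_{n-q}+a_{n-q-1}=\deg(V)$ — and $s=1$, where $d_{0}=k$ and $d_{1}=\deg(\WW)$.

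The step I expect to require the most care is the birationality of $\pi$ on the two intersection cycles: one must rule out that the generic fibre is positive-dimensional, or that $\pi$ is generically finite of degree $>1$, which is precisely where the density of the open Schubert cell in $\sigma^{q}_{j}$ (respectively in $\Sigma^{p}_{j}$) for the generic flag enters. Everything else is a transversality citation plus the bookkeeping of monomials in $h$ and $\hh$.
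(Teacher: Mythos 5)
Your proposal is correct and follows essentially the same route as the paper: the paper's proof consists precisely of the two identities $\deg(P^{V}_{j})=\int_M[{\rm{Con}}(V)]\cdot [S_{\HH_{q-j+2}}]\cdot h^{q-j}$ and $\deg(P^{\WW}_{s})=\int_M[S_{\WW}]\cdot [S_{\HH_{p-s+2}}]\cdot h^{n-s}$, which you derive (via Kleiman transversality, the birationality of $\pi$ on the intersection cycles, and the projection formula) before carrying out the same monomial bookkeeping in $H^{*}(M)$. You simply supply the justification that the paper leaves implicit.
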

\begin{proof}
It follows from the facts 
$$\deg(P^{V}_{j})=\int_M[{\rm{Con}}(V)]\cdot [S_{\HH_{q-j+2}}]\cdot h^{q-j}$$
and 
$$\deg(P^{\WW}_{j})=\int_M[S_{\WW}]\cdot [S_{\HH_{p-j+2}}]\cdot h^{n-j}.$$
\end{proof}

Let us assume now that the flag $\DD$ is sufficiently generic. We state now our main result which relates the characteristic numbers of $V$ and $\WW$ when $V$ is $\WW$--invariant. We say that $V$ is $\WW$--invariant if $V \not\subseteq  {\rm{Sing\WW}}$ and $i^*\omega$ vanishes identically, where $i:V \hookrightarrow  \PP$ is the inclusion and $\omega$ is the twisted $k$-symmetric $(n-p)$--form defining $\WW$.

\begin{theorem}\label{T:theorem}
Suppose that $\WW$ is a $k$--distribution of dimension $p$ on $\PP$ admitting an invariant projective subvariety $V$ of dimension $q \geq p$ and fix $m \in \{1, \ldots p\}$. If $j$ is a number between $0$ and $q-p$ such that $P^{V}_{q-p-j+m} \subseteq P^{\WW}_{m}$ then $P^{V}_{q-p-j} \not\subseteq  P^{\WW}_{m}$ and
$$
\frac{a_{n-(p-m+j)}+a_{n-(p-m+j)-1}}{a_{n-(p+j)}+a_{n-(p+j)-1}} \leq d_{m}+d_{m-1}.
$$
In particular the inequality holds true for $j=0$.
 
\end{theorem}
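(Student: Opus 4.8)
The plan is to reduce the inequality to a comparison of degrees of polar loci and then apply B\'ezout's theorem. Set $\ell_{0}=q-p-j$ and $\ell_{1}=q-p-j+m$; since $0\le j\le q-p$ and $1\le m\le p$, both $\ell_{0}$ and $\ell_{1}$ belong to $\{0,\dots,q\}$. Proposition \ref{P:grau} gives
$$
\deg(P^{V}_{\ell_{0}})=a_{n-(p+j)}+a_{n-(p+j)-1},\qquad
\deg(P^{V}_{\ell_{1}})=a_{n-(p-m+j)}+a_{n-(p-m+j)-1},
$$
and $\deg(P^{\WW}_{m})=d_{m}+d_{m-1}$, so the assertion amounts to $\deg(P^{V}_{\ell_{1}})\le\deg(P^{V}_{\ell_{0}})\cdot\deg(P^{\WW}_{m})$ together with $P^{V}_{\ell_{0}}\not\subseteq P^{\WW}_{m}$ (the latter in particular gives $\deg(P^{V}_{\ell_{0}})\neq0$, so the quotient makes sense).

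First I would record that, for any flag $\DD$, the polar loci of $V$ form a descending chain $P^{V}_{0}\supseteq P^{V}_{1}\supseteq\cdots\supseteq P^{V}_{q}$: if $x\in P^{V}_{i+1}$ then $\dim(T_{x}V\cap L_{q-i+1})\ge i$, and since $L_{q-i+2}$ is a hyperplane section of $L_{q-i+1}$ this forces $\dim(T_{x}V\cap L_{q-i+2})\ge i-1$, i.e.\ $x\in P^{V}_{i}$. Iterating, $P^{V}_{\ell_{1}}\subseteq P^{V}_{\ell_{0}}$, and combined with the hypothesis $P^{V}_{\ell_{1}}\subseteq P^{\WW}_{m}$ this yields
$$
P^{V}_{\ell_{1}}\subseteq P^{V}_{\ell_{0}}\cap P^{\WW}_{m}.
$$
When $j=0$ this hypothesis is automatic: $\WW$--invariance of $V$ gives ${\rm{Con}}(V)\subseteq S_{\WW}$ (at a generic smooth point $x\in V$ a tangent $p$--plane $T^{s}_{x}\WW$ is contained in $T_{x}V$, so any hyperplane through $T_{x}V$ contains $T^{s}_{x}\WW$), whence $P^{V}_{q-p+m}=\pi\big({\rm{Con}}(V)\cap S_{\HH_{p-m+2}}\big)\subseteq\pi\big(S_{\WW}\cap S_{\HH_{p-m+2}}\big)=P^{\WW}_{m}$.

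The main obstacle is the dimension bookkeeping. For a sufficiently generic $\DD$ one has $\dim P^{V}_{\ell_{0}}=p+j$, $\dim P^{\WW}_{m}=n-m$, $\dim P^{V}_{\ell_{1}}=p+j-m$, and $(p+j)+(n-m)-n=p+j-m$, so $P^{V}_{\ell_{1}}$ already has the dimension expected of the intersection. The point to be proved is that $P^{V}_{\ell_{0}}\cap P^{\WW}_{m}$ has pure dimension $p+j-m$ --- equivalently that $P^{V}_{\ell_{0}}\not\subseteq P^{\WW}_{m}$ and that $P^{V}_{\ell_{1}}$ is a union of irreducible components of $P^{V}_{\ell_{0}}\cap P^{\WW}_{m}$. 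I would obtain this by writing $P^{V}_{\ell_{0}}$ and $P^{\WW}_{m}$ as closures of preimages of the Schubert--type cycles $\sigma^{q}_{\ell_{0}}$ and $\Sigma^{p}_{m}$ under the Gauss maps $\mathcal G_{V}$ and $\mathcal G_{\WW}$, and then running a general--position argument in the spirit of Kleiman's transversality theorem \cite{Kle}; the subtlety here is that $P^{V}_{\ell_{0}}$ and $P^{\WW}_{m}$ are cut out by nested members $L_{p+j+2}\subseteq L_{p-m+2}$ of the \emph{same} flag, and the hypothesis $P^{V}_{\ell_{1}}\subseteq P^{\WW}_{m}$ is what rules out a degenerate behaviour of $\mathcal G_{\WW}$ restricted to $V$.

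Granting this, the refined B\'ezout theorem in $\PP$ gives
$$
\deg(P^{V}_{\ell_{1}})\ \le\ \deg\big(P^{V}_{\ell_{0}}\cap P^{\WW}_{m}\big)\ \le\ \deg(P^{V}_{\ell_{0}})\cdot\deg(P^{\WW}_{m}),
$$
which by the first paragraph is precisely the desired inequality; and the hypotheses are satisfied for $j=0$ by the observation above.
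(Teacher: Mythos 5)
Your overall architecture matches the paper's: reduce the displayed inequality to $\deg(P^{V}_{\ell_1})\le\deg(P^{V}_{\ell_0})\cdot\deg(P^{\WW}_{m})$ via Proposition \ref{P:grau}, combine the descending chain $P^{V}_{\ell_1}\subseteq P^{V}_{\ell_0}$ with the hypothesis to get $P^{V}_{\ell_1}\subseteq P^{V}_{\ell_0}\cap P^{\WW}_{m}$, show the intersection has dimension $p+j-m$ so that every component of $P^{V}_{\ell_1}$ is a component of it, and finish with refined B\'ezout. Your degree bookkeeping is correct, and your justification of the $j=0$ case via $\mathrm{Con}(V)\subseteq S_{\WW}$ is a genuine addition --- the paper asserts that case without comment.

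The problem is that the step you yourself isolate as ``the main obstacle'' --- the dimension bound on $P^{V}_{\ell_0}\cap P^{\WW}_{m}$ for a generic flag, which is also what yields $P^{V}_{\ell_0}\not\subseteq P^{\WW}_{m}$ --- is precisely the content of the paper's proof, and you do not carry it out: ``I would obtain this by \dots\ running a general-position argument in the spirit of Kleiman's transversality theorem'' is a plan, not an argument. The paper executes it by working over the variety $\mathbb F$ of nested pairs $L_{p+j+2}\subset L_{p-m+2}$, on which $\mathrm{Aut}(\mathbb P^n)$ acts transitively; it forms the incidence variety $\mathcal U\subset\mathbb F\times X\times\mathbb G(q,n)$ whose fibre over $\mathbb F$ is $\Sigma^{p}_{m}\times\sigma^{q}_{q-p-j}$, takes the fibre product with $\tilde V=V_{sm}\cap U$ along $\mathcal G_{\WW}\times\mathcal G_{V}$, and applies Kleiman's theorem to the equivariant map $\mathcal U\to X\times\mathbb G(q,n)$ to compute the dimension of this fibre product; the hypothesis $P^{V}_{q-p-j+m}\subseteq P^{\WW}_{m}$ is then used only to make $\psi:\tilde V\times_{X\times\mathbb G(q,n)}\mathcal U\to\mathbb F$ dominant, forcing the generic fibre to have dimension exactly $p+j-m$. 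In particular your heuristic that the hypothesis ``rules out a degenerate behaviour of $\mathcal G_{\WW}$ restricted to $V$'' in the dimension count is misplaced: the upper bound $\dim\bigl(P^{V}_{\ell_0}\cap P^{\WW}_{m}\cap\tilde V\bigr)\le p+j-m$ comes from equivariance alone, and the hypothesis serves to establish dominance and to place $P^{V}_{\ell_1}$ inside the intersection. Without actually setting up this correspondence --- and in particular without addressing the fact that the two Schubert conditions are tied together by a single nested flag, which is exactly why one parametrizes by $\mathbb F$ instead of translating the two cycles independently --- the claim that the intersection has the expected dimension, and hence the conclusion $P^{V}_{q-p-j}\not\subseteq P^{\WW}_{m}$, remains unproved.
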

\begin{proof}
Let $j$ be a number between $0$ and $q-p$. To simplify the notation let us fix $\lambda_1=p+j+2$ and $\lambda_2=p-m+2$. Hence $P^V_{q-p-j}=\overline{{\mathcal G_V}_{|_{V_{sm}}}^{-1}(\sigma_{q-p-j}^{q}(L_{\lambda_1}))}$ and 
$P^{\WW}_m=\overline{{\mathcal G_{\WW}}^{-1}_{|_U}(\Sigma^p_m(L_{\lambda_2}))}$. We will first show that for a generic pair $(L_{\lambda_1},L_{\lambda_2})\in \mathbb G(n-\lambda_1,n)\times \mathbb G(n-\lambda_2,n)$ satisfying $L_{\lambda_1}\subset L_{\lambda_2}$, the dimension of $P^V_{q-p-j}\cap P^{\WW}_m$ is at most $p+j-m$. 

Let $\mathbb F\subset\mathbb G(n-\lambda_1,n)\times \mathbb G(n-\lambda_2,n)$ be the closed set of pairs satisfying $L_{\lambda_1}\subset L_{\lambda_2}$ and consider
$$
\mathcal U=\{(L_{\lambda_1},L_{\lambda_2},\Lambda,\Gamma)\in\mathbb F\times X\times \mathbb G(q,n)\;:\;
\Gamma\in\sigma_{q-p-j}^{q}(L_{\lambda_1})\;,\Lambda\in\Sigma^p_m(L_{\lambda_2})\}.
$$
If $\tilde{V}=V_{sm}\cap U$ then $P^V_{q-p-j}\cap P^{\WW}_m\cap \tilde{V}=p_1(\psi^{-1}(L_{\lambda_1},L_{\lambda_2}))$ where $p_1$ and $\psi$ are the morphisms defined below 
\[
 \xymatrix{
 \tilde{V} \times_{X\times\mathbb G(q,n)}\ar@/^0.5cm/@{->}[rr]^{\psi} \mathcal U \ar[d]^{p_1} \ar[r]  & \mathcal U  \ar[d] \ar[r]  & \mathbb F
  \\
  \tilde{V} \ar[r]^{\mathcal G_{\WW}\times\mathcal G_{V} } & X\times\mathbb G(q,n)}
\]
The unlabeled arrows are the corresponding natural projections. Notice that $X\times\mathbb G(q,n)$ is a $\mathrm{aut}(\mathbb
P^n)$-homogeneous space under the natural action. Since the
vertical arrow $\mathcal U \to X\times\mathbb G(q,n)$ is a
$\mathrm{aut}(\mathbb P^n)$-equivariant morphism the transversality
of the general translate (cf. \cite{Kle}) implies that
\begin{eqnarray*}
\dim\tilde{V} \times_{X\times\mathbb G(q,n)}  \mathcal U &=& \dim \tilde{V} + \dim \mathcal U -
\dim X\times\mathbb G(q,n) \\
&=& q+\dim \mathcal U -k\dim\mathbb G(p,n)-\dim\mathbb G(q,n).
\end{eqnarray*}
Since a fiber of the map $\mathcal U \longrightarrow \mathbb F$ is $\Sigma^p_m\times\sigma^q_{q-p-j}$ one obtains
\begin{eqnarray*}
\dim \mathcal U = k\dim\mathbb G(p,n)-m+\dim\mathbb G(q,n)-(q-p-j)+\dim\mathbb F.
\end{eqnarray*}
The map $\psi$ is dominant because by hypothesis given a generic pair $(L_{\lambda_1},L_{\lambda_2})\in \mathbb F$ we can take  $x\in P^{V}_{q-p-j+m}\cap \tilde{V} \subseteq P^{\WW}_{m}\cap P^{V}_{q-p-j}\cap\tilde{V}$. This fact together with the above equalities we obtain  $\dim\psi^{-1}(L_{\lambda_1},L_{\lambda_2})=p+j-m$ for generic pair in $\mathbb F$. Therefore
\begin{eqnarray*}
\dim P^V_{q-p-j}\cap P^{\WW}_m\cap \tilde{V}\le \dim \psi^{-1}(L_{\lambda_1},L_{\lambda_2})=p+j-m.
\end{eqnarray*}

This shows that $P^{V}_{q-p-j} \not\subseteq  P^{\WW}_{m}$. Furthermore, from the fact $P^{V}_{q-p-j+m}\cap\tilde{V}$ is dense in $P^{V}_{q-p-j+m}$ and $P^{V}_{q-p-j+m}$ has pure dimension $p+j-m$ one obtains that each irreducible component of $P^{V}_{q-p-j+m}$ is an irreducible component of $P^{\WW}_{m}\cap P^{V}_{q-p-j}$. To concludes the theorem we have just to apply Bezout' Theorem and Proposition \ref{P:grau}.

\end{proof}

\begin{corollary}\label{C:corollary}
Let $\WW$ be a $k$--distribution of dimension $p$ on $\PP$ and $V$ a smooth invariant hypersurface of degree $d$. Then for each $m\in\{1,...,p\}$ we obtain
$$
(d-1)^m\le d_m+d_{m-1}.
$$
In particular, 
$$
d\le k+\deg(\WW)+1.
$$
\end{corollary}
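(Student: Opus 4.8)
The plan is to deduce Corollary \ref{C:corollary} directly from Theorem \ref{T:theorem} by computing the characteristic numbers of a smooth hypersurface. First I would recall that for a smooth hypersurface $V\subseteq\PP$ of degree $d$ and dimension $q=n-1$, the conormal variety ${\rm Con}(V)$ is the image of $\Pp(N^*V)$, a $\Pp^0$-bundle over $V$, so it maps birationally onto its image and the projections $\pi$, $\check\pi$ realize it as (essentially) the graph of the Gauss map $\mathcal G_V$. The degrees of the polar classes $P^V_j$ of a smooth hypersurface are classically known: $\deg(P^V_j)=d(d-1)^j$ for $j=0,\ldots,n-1$. This can be seen either by citing \cite{Pi} or by a direct local computation, since near a smooth point the Gauss map in suitable coordinates is $(x_1,\ldots,x_{n-1})\mapsto [\partial f/\partial x_1:\cdots]$ and the pullback of the Schubert cycle $\sigma^{q}_j$ is cut out by the vanishing of an appropriate minor of the Hessian, giving degree $(d-1)^j$ times $d$ after accounting for the hyperplane section; alternatively it follows from Proposition \ref{P:grau} once one knows the class $[{\rm Con}(V)]$, whose coefficients $a_i$ one reads off from $\deg(P^V_j)=a_{n-(q-j)}+a_{n-(q-j)-1}$.

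Granting $\deg(P^V_j)=a_{n-(q-j)}+a_{n-(q-j)-1}=d(d-1)^j$, the next step is to substitute into the inequality of Theorem \ref{T:theorem} with $q=n-1$ and $j=0$. The numerator $a_{n-(p-m)}+a_{n-(p-m)-1}$ is $\deg(P^V_{q-(p-m)})=\deg(P^V_{n-1-p+m})=d(d-1)^{\,n-1-p+m}$, and the denominator $a_{n-p}+a_{n-p-1}$ is $\deg(P^V_{q-p})=\deg(P^V_{n-1-p})=d(d-1)^{\,n-1-p}$. Their ratio is exactly $(d-1)^m$, so Theorem \ref{T:theorem} yields $(d-1)^m\le d_m+d_{m-1}$, which is the first assertion. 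Here one should note that the hypothesis of Theorem \ref{T:theorem} — that $P^V_{q-p-j+m}\subseteq P^{\WW}_m$ — is automatic in the hypersurface case with $j=0$: a general hyperplane containing $L_{p-m+2}$ meets $V$ transversally, and along the polar locus $P^V_{q-p+m}$ the relevant tangent spaces to $V$ meet the center with excess dimension, so since $V$ is $\WW$-invariant every $T^i_xV$ contains one of the $p$-planes $T^i_x\WW$, forcing the corresponding incidence condition defining $P^{\WW}_m$.

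Finally, the ``in particular'' statement is the case $m=1$: there the inequality reads $(d-1)^1\le d_1+d_0$, and by the remarks after the definition of characteristic numbers (or by Proposition \ref{P:grau}) $d_0=k$ and $d_1=\deg(\WW)$, so $d-1\le k+\deg(\WW)$, i.e. $d\le k+\deg(\WW)+1$. I would present this as a one-line corollary of the $m=1$ instance. The main obstacle I anticipate is not in the algebra — which is completely mechanical once the polar degrees are in hand — but in justifying that the containment hypothesis $P^V_{q-p-j+m}\subseteq P^{\WW}_m$ of Theorem \ref{T:theorem} genuinely holds for a smooth invariant hypersurface; this requires a careful pointwise argument showing that $\WW$-invariance of $V$ propagates the excess-intersection condition from the Gauss image of $V$ to that of $\WW$, using that at a smooth point $x\in V_{sm}\cap\WW_{sm}$ each $p$-plane $T^i_x\WW$ is contained in the hyperplane $T_xV$ (which is the geometric content of $i^*\omega\equiv 0$). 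Once that is established the proof is immediate.
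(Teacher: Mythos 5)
Your proposal is correct and follows essentially the same route as the paper: invoke the classical formula $\deg(P^{V}_{j})=d(d-1)^{j}$ for a smooth hypersurface, apply Theorem \ref{T:theorem} with $j=0$ so that the ratio $\deg(P^{V}_{n-1-p+m})/\deg(P^{V}_{n-1-p})$ collapses to $(d-1)^{m}$, and then specialize to $m=1$ using $d_{0}=k$, $d_{1}=\deg(\WW)$. Your extra discussion of why the containment hypothesis holds for $j=0$ (invariance forces each $T^{i}_{x}\WW\subseteq T_{x}V$, whence the incidence with $L_{p-m+2}$ propagates) is sound and is in fact already absorbed into the ``in particular the inequality holds true for $j=0$'' clause of Theorem \ref{T:theorem}, so the paper does not repeat it in the corollary.
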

\begin{proof}
When $V$ is a smooth hypersurface, it is well known that $\deg(P^{V}_{j})=d(d-1)^j$ (cf. \cite{S,T} or \cite{Soa1} for a modern approach). In addition, follows from Theorem \ref{T:theorem} that for each $m\in\{1,...,p\}$ we have 
$$\deg(P^{V}_{n-1-p+m})\le \deg(P^{V}_{n-1-p})(d_m+d_{m-1}).$$ 
\end{proof}

\begin{remark}\rm
This corollary generalize the bound obtained by M. Soares for one dimensional foliations in \cite{Soa}. We also remark that the bound 
$$d \le deg(\WW)+(n-p)+1$$
is proved in \cite{BrMe} for normal crossing hypersurfaces invariant by a $p$--dimensional foliation. 
\end{remark}
 
\begin{remark}\rm
By the classical formulas of \cite{S,T} for the polar classes of a smooth complete intersection $V$ it is possible to obtains more explicit relations (similar to \cite[Corollary 6.3]{Mol}) between the degree of the homogeneous polynomials defining $V$ and the characteristic numbers of $\WW$.
\end{remark}

\begin{remark}\rm\footnote{The autors are grateful to Jorge Vit\'orio Pereira for having pointed out this remark.}
Unlike the case of foliations, we cannot expect to bound the degree of non smooth invariant subvarieties in terms of the degree of the web, even in the case of nodal curves in dimension two. 
To see this let us take the elliptic curve $E=\mathbb{C}/ \langle 1, \tau \rangle$ and consider the foliation $\FF_{\alpha}$ induced by the $1$-form $\omega=dy-\alpha dx$ on the complex torus $X=E \times E$, where $\alpha \in \mathbb{Q}$. Since $X$ smooth we have an embedding $X \hookrightarrow \Pp^{5}$ and  if we fix a leaf $C_{\alpha}$ of $\FF_{\alpha}$ one may take the restriction of a generic linear projection to $\Pp^{2}$, $\pi_{\alpha}:X \rightarrow \Pp^{2}$, such that the image $D_{\alpha}=\pi(C_{\alpha})$ would be an algebraic curve which has only nodal singularities. Projecting the foliation $\FF_{\alpha}$ we obtain a $d$--web $\WW_{\alpha}$, where $d=deg(X)>1$. Observe that 
$$\deg(\WW_{\alpha})={\rm{tang}}(\WW_{\alpha}, L)={\rm{tang}}(\FF_{\alpha},H)= T^{*}\FF_{\alpha}.H+H^{2}$$
where the last equality follows from \cite[proposition 2, page 23]{Br}, $L$ is a generic line in $\Pp^{2}$ and $H$ is a hyperplane section in $X$; on the other hand the cotangent bundle $T^{*}\FF$ is the same for all these foliations, therefore $\deg(\WW_{\alpha})$ does not depend of $\alpha$. Since varying $\alpha$ we can grow the intersection number between $C_{\alpha}$ and the curve $C:=\{0\} \times E \subseteq X$, and therefore also the intersection between $D_{\alpha}$ and the fixed curve $\pi_{\alpha}(C)$, we deduce that $deg(D_{\alpha})$ increase and cannot be bound by the fixed number $\deg(\WW_{\alpha})$. 
\end{remark}

\bibliographystyle{plain}
\bibliography{nc1}

\end{document}